\theoremstyle{plain}
\newtheorem{theorem}{Theorem}[section]
\newtheorem{lemma}[theorem]{Lemma}
\newtheorem{definition}[theorem]{Definition}
\date{}
\title{A note on some best proximity point theorems proved under P-property}
\author{Ali Abkar\footnote{Department of Mathematics, Imam Khomeini International University,
34149 Qazvin, Iran; email: abkar@ikiu.ac.ir}, Moosa Gabeleh\footnote{Corresponding author:
Department of Mathematics, Ayatollah Boroujerdi
University, Boroujerd, Iran; email:
gab.moo@gmail.com}}
\begin{document}
\maketitle

 \noindent{\bf Abstract.}
 In this article, we show that some recent results on the existence of best proximity points
 can be obtained from the same results in fixed point theory.

\noindent \textbf{Key words:} Best proximity
point; fixed point; weakly contractive mappings, P-property.
\\
\noindent {\bf MSC2000}: 47H10,
47H09.

\section{Introduction}
Let $A$ and $B$ be two nonempty subsets of a metric space $(X,d)$. In this paper,
we adopt the following notations and definitions.
$$D(x,B):=\inf\{d(x,y) : y\in B\}, \ \ \text{for all} \ \ x\in X,$$
$$A_0:=\{x\in A : d(x,y)=dist(A,B), \ \ \text{for some} \ \ y\in B\},$$
$$B_0:=\{y\in B : d(x,y)=dist(A,B), \ \ \text{for some} \ \ x\in A\}.$$

The notion of \emph{best proximity point} is defined as follows.

\begin{definition}
 Let $A$ and $B$ be nonempty subsets of a metric space $(X,d)$ and $T:A\rightarrow B$
 be a non-self mapping. A point $x^*\in A$ is called a best proximity point of $T$ if
  $d(x^*,Tx^*)=dist(A,B)$,
 where
 $$
 dist(A,B):=\inf\{d(x,y) : (x,y)\in A\times B\}.
 $$
\end{definition}

Similarly, for a multivalued non-self mapping $T:A\to 2^B$, where
$(A,B)$ is a nonempty pair of subsets of a metric space $(X,d)$, a
point $x^*\in A$ is a best proximity point of $T$ provided that
$D(x^*,Tx^*)=dist(A,B)$.

Recently, the notion of P-property was introduced in \cite{Raj} as follows.

\begin{definition}(\cite{Raj})
Let $(A,B)$ be a pair of nonempty subsets of a metric space
$(X,d)$ with $A_0\neq\emptyset$. The pair $(A,B)$ is said to have
P-property if and only if
\begin{align*}\begin{cases}d(x_1,y_1)=dist(A,B)\\
d(x_2,y_2)=dist(A,B)\end{cases}\Longrightarrow
d(x_1,x_2)=d(y_1,y_2),\end{align*} where $x_1,x_2\in A_0$ and
$y_1,y_2\in B_0$.
\end{definition}

By using this notion, some best proximity point results were proved for various classes of non-self mappings.
Here, we state some of them.

\begin{theorem}(\cite{Raj})
Let $(A,B)$ be a pair of nonempty closed subsets of a complete metric space $X$
such that $A_0$ is nonempty. Let $T:A\rightarrow B$ be a weakly contractive non-self mapping,
that is,
$$
d(Tx,Ty)\leq d(x,y)-\phi(d(x,y)) \ \forall x,y\in A,
$$
where $\phi:[0,\infty)\rightarrow[0,\infty)$ is a continuous and nondecreasing function
such that $\phi$ is positive on $(0,\infty), \phi(0)=0$ and $\lim_{t\rightarrow\infty}\phi(t)=\infty$.
Assume that the pair $(A,B)$ has the P-property and $T(A_0)\subseteq B_0$. Then
$T$ has a unique best proximity point.
\end{theorem}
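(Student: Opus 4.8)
The plan is to deduce this theorem from the corresponding fixed point theorem for weakly contractive self-mappings, which is precisely the reduction announced in the abstract. The idea is to manufacture, out of the non-self mapping $T$, a self-mapping $G$ of $A_0$ that inherits the weak contraction, to identify best proximity points of $T$ with fixed points of $G$, and then to invoke the known self-mapping result on the complete metric space $A_0$.

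First I would construct $G$. Fix $x\in A_0$; since $T(A_0)\subseteq B_0$ we have $Tx\in B_0$, so by the definition of $B_0$ there is some $u\in A_0$ with $d(u,Tx)=dist(A,B)$. The crucial observation is that $u$ is \emph{unique}: if $d(u_1,Tx)=d(u_2,Tx)=dist(A,B)$, then the P-property applied with $y_1=y_2=Tx$ forces $d(u_1,u_2)=d(Tx,Tx)=0$. Hence the assignment $Gx:=u$, characterized by $d(Gx,Tx)=dist(A,B)$, is a well-defined map $G:A_0\to A_0$. Next I would check that $G$ is weakly contractive with the \emph{same} $\phi$. For $x,y\in A_0$ we have $d(Gx,Tx)=dist(A,B)$ and $d(Gy,Ty)=dist(A,B)$, so the P-property yields $d(Gx,Gy)=d(Tx,Ty)$; combining this with the weak contractivity of $T$ gives $d(Gx,Gy)\le d(x,y)-\phi(d(x,y))$. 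Moreover, a point $x^*\in A$ is a best proximity point of $T$ exactly when $d(x^*,Tx^*)=dist(A,B)$, i.e. precisely when $x^*\in A_0$ and $Gx^*=x^*$; thus the best proximity points of $T$ coincide with the fixed points of $G$.

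The main obstacle is to guarantee that the domain of $G$ is complete so that the fixed point theorem applies; this amounts to showing that $A_0$ is closed, hence complete as a closed subset of the complete space $A$. I would prove this by taking a sequence $x_n\in A_0$ with $x_n\to x$ and choosing $y_n\in B_0$ with $d(x_n,y_n)=dist(A,B)$. The P-property gives $d(y_n,y_m)=d(x_n,x_m)$ for all $n,m$, so $\{y_n\}$ is Cauchy (because $\{x_n\}$ converges) and therefore converges to some $y\in B$, as $B$ is closed in the complete space $X$. Passing to the limit yields $d(x,y)=\lim_n d(x_n,y_n)=dist(A,B)$, whence $x\in A_0$, and $A_0$ is closed.

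With $A_0$ established as a complete metric space, the hypotheses on $\phi$ (continuous, nondecreasing, positive on $(0,\infty)$, with $\phi(0)=0$ and $\lim_{t\to\infty}\phi(t)=\infty$) are exactly those required by the classical fixed point theorem for weakly contractive self-mappings. Applying that theorem to $G:A_0\to A_0$ produces a unique fixed point $x^*\in A_0$, and by the identification made above, $x^*$ is the unique best proximity point of $T$. I expect the verification of the well-definedness and weak contractivity of $G$ to be short once the P-property is invoked, so the only genuinely substantive step is the closedness of $A_0$, which is where the completeness of $B$ and the P-property must be used together.
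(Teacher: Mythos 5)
Your proposal is correct and follows essentially the same route as the paper: your map $G$ is exactly the composition $g^{-1}T$ that the paper builds from the bijective isometry $g:A_0\to B_0$ of its Lemma 2.2, the identification of best proximity points of $T$ with fixed points of $G$ is the same, and the final appeal to the classical fixed point theorem for weakly contractive self-mappings matches the paper's citation of Rhoades. The only difference is that you prove the closedness of $A_0$ inline (correctly, via the P-property and completeness of $B$), whereas the paper quotes this as Lemma 2.1 from an earlier reference.
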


\begin{theorem}(\cite{AG6})
Let $(A,B)$ be a pair of nonempty closed subsets of a Banach space $X$
such that $A$ is compact and $A_0$ is nonempty. Let $T:A\rightarrow B$ be a nonexpansive mapping,
that is
$$
\|Tx-Ty\|\leq\|x-y\| \ \forall x,y\in A.
$$
Assume that the pair $(A,B)$ has the P-property and $T(A_0)\subseteq B_0$. Then
$T$ has a best proximity point.
\end{theorem}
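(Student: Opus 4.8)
The plan is to stay within the philosophy of the paper and deduce the conclusion from the classical fixed point theorem for nonexpansive (indeed, continuous) self-mappings of a nonempty compact convex set, i.e. Schauder's theorem. The bridge between the non-self map $T:A\to B$ and a genuine self-map is furnished by the P-property, which I would first exploit to manufacture a distance-preserving bijection between $A_0$ and $B_0$.

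First I would record that the P-property forces proximal points to be unique. Indeed, if some $x\in A_0$ admitted $y_1,y_2\in B_0$ with $\|x-y_1\|=\|x-y_2\|=dist(A,B)$, then applying the P-property to the pairs $(x,y_1)$ and $(x,y_2)$ yields $\|y_1-y_2\|=\|x-x\|=0$, so $y_1=y_2$. Hence there is a well-defined map $h:A_0\to B_0$ sending each $x$ to its unique proximal partner, and the P-property says exactly that $\|h(x_1)-h(x_2)\|=\|x_1-x_2\|$, so $h$ is an isometry. It is onto $B_0$ because every point of $B_0$ has a proximal partner in $A_0$; therefore $h^{-1}:B_0\to A_0$ is an isometry as well.

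Next, using the hypothesis $T(A_0)\subseteq B_0$, I would set
$$
f:=h^{-1}\circ T|_{A_0}:A_0\longrightarrow A_0 .
$$
Since $h^{-1}$ is an isometry and $T$ is nonexpansive, $f$ is nonexpansive, because $\|fx_1-fx_2\|=\|Tx_1-Tx_2\|\le\|x_1-x_2\|$. The crucial translation step is that a fixed point of $f$ is precisely a best proximity point of $T$: if $f(x^*)=x^*$, then $h^{-1}(Tx^*)=x^*$, i.e. $Tx^*=h(x^*)$, whence $\|x^*-Tx^*\|=\|x^*-h(x^*)\|=dist(A,B)$.

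It then remains to produce a fixed point of $f$. I would first note that $x\mapsto D(x,B)$ is continuous, so $A_0=\{x\in A:D(x,B)\le dist(A,B)\}$ is closed in the compact set $A$ and hence compact, while $f$, being nonexpansive, is continuous. The step I expect to be the main obstacle is guaranteeing that the set to which the fixed point theorem is applied is \emph{convex}: compactness of $A_0$ is automatic, but convexity is not, and a continuous self-map of a compact nonconvex set (e.g. a rotation of a circle) can fail to have a fixed point. One checks that $A_0$ is convex once $A$ and $B$ are convex, since for $x_1,x_2\in A_0$ the convex combination $tx_1+(1-t)x_2$ together with $th(x_1)+(1-t)h(x_2)$ again realizes the distance $dist(A,B)$ by the triangle inequality. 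Under that proviso, $f$ is a continuous self-map of the nonempty compact convex set $A_0$, and Schauder's theorem delivers a fixed point $x^*$, which by the translation above is the desired best proximity point of $T$; verifying convexity of $A_0$ and that $f$ is a bona fide self-map is exactly where the P-property and the condition $T(A_0)\subseteq B_0$ are used in full.
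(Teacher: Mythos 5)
Your argument is essentially the paper's own: Lemma 2.2 of the paper constructs exactly your isometry $h$ (called $g$ there), and Remark 2.1 disposes of this theorem by observing that $g^{-1}T$ is a nonexpansive self-map of $A_0$ and invoking the fixed point theorem for nonexpansive self-maps of a nonempty compact \emph{convex} subset of a Banach space. The convexity obstacle you single out is therefore not a defect of your proof relative to the paper's; it is a genuine lacuna in the statement as reproduced here, which the paper's one-line remark passes over in silence. Indeed, without convexity the theorem is false: take $A=B$ equal to the unit circle in the plane and $T$ a nontrivial rotation. Then $dist(A,B)=0$, $A_0=A$, $B_0=B$, the P-property holds (since $d(x,y)=0$ forces $x=y$), $T$ is nonexpansive with $T(A_0)\subseteq B_0$, and a best proximity point would be a fixed point of the rotation, which does not exist. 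The source \cite{AG6} assumes $A$ and $B$ convex, and under that hypothesis your verification that $A_0$ is convex (via $th(x_1)+(1-t)h(x_2)$ and the triangle inequality) is correct, so your proof is complete exactly where the paper's is.

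One small slip worth fixing: $A_0$ is not the sublevel set $\{x\in A: D(x,B)\le dist(A,B)\}$, because membership in $A_0$ requires the infimum $D(x,B)$ to be \emph{attained} at some $y\in B$, and $B$ is only assumed closed, not compact. So your closedness argument for $A_0$ does not work as written. This is harmless for the final conclusion: closedness of $A_0$ under the P-property is precisely the paper's Lemma 2.1, and together with compactness of $A$ it yields the compactness of $A_0$ that you need before applying Schauder.
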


\begin{theorem}(\cite{B})
Let $(A,B)$ be a pair of nonempty closed subsets of a complete metric space $X$
such that $A_0$ is nonempty. Let $T:A\rightarrow B$ be a Meir-Keeler non-self mapping,
that is, for all $x,y\in A$ and for any $\varepsilon>0$, there exists $\delta(\varepsilon)>0$ such that
$$
d(x,y)<\varepsilon+\delta \quad{implies} \quad d(Tx,Ty)\leq\varepsilon.
$$
Assume that the pair $(A,B)$ has the P-property and $T(A_0)\subseteq B_0$. Then
$T$ has a unique best proximity point.
\end{theorem}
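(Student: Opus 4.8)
The plan is to reduce Theorem 1.5 to the classical Meir--Keeler fixed point theorem exactly as the abstract promises, by manufacturing an auxiliary self-map on $A_0$ whose fixed points are precisely the best proximity points of $T$. First I would record that $A_0$ is a \emph{complete} metric space. Since $X$ is complete it suffices to check $A_0$ is closed: if $x_n\in A_0$ with $x_n\to x$, pick $y_n\in B_0$ with $d(x_n,y_n)=dist(A,B)$; the P-property forces $d(y_n,y_m)=d(x_n,x_m)$, so $(y_n)$ is Cauchy and converges to some $y\in B$ (as $B$ is closed), whence $d(x,y)=dist(A,B)$ and $x\in A_0$. This uses precisely the hypotheses that both $A$ and $B$ are closed in the complete space $X$.

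Next I would build the operator. Because $A_0\neq\emptyset$ and $T(A_0)\subseteq B_0$, for every $x\in A_0$ the point $Tx$ lies in $B_0$, so there exists at least one $u\in A_0$ with $d(u,Tx)=dist(A,B)$. If $u_1,u_2\in A_0$ both realize this distance, applying the P-property to the pairs $(u_1,Tx)$ and $(u_2,Tx)$ yields $d(u_1,u_2)=d(Tx,Tx)=0$, so $u$ is unique. Hence a single-valued map $g:A_0\to A_0$ is well defined by the requirement $d(gx,Tx)=dist(A,B)$. The crucial identity is that for $x,y\in A_0$ the relations $d(gx,Tx)=dist(A,B)=d(gy,Ty)$ together with the P-property give $d(gx,gy)=d(Tx,Ty)$; that is, $g$ is an isometric image of $T$ on $A_0$.

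Given this identity the Meir--Keeler condition transfers verbatim: for a prescribed $\varepsilon>0$ choose the same $\delta(\varepsilon)>0$ furnished for $T$, so that $d(x,y)<\varepsilon+\delta$ implies $d(gx,gy)=d(Tx,Ty)\leq\varepsilon$. Thus $g$ is a Meir--Keeler self-mapping of the complete metric space $A_0$, and the Meir--Keeler fixed point theorem supplies a unique $x^*\in A_0$ with $gx^*=x^*$. By the defining property of $g$ this means $d(x^*,Tx^*)=dist(A,B)$, so $x^*$ is a best proximity point of $T$. For uniqueness I would note that any best proximity point $z^*$ lies in $A_0$ and satisfies $d(z^*,Tz^*)=dist(A,B)$, which by the uniqueness in the construction of $g$ forces $gz^*=z^*$; the uniqueness of the fixed point of $g$ then gives $z^*=x^*$.

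I expect the only genuinely delicate step to be the completeness of $A_0$, since the Meir--Keeler theorem cannot be invoked otherwise; the transfer of the contractive condition and the uniqueness argument are immediate once the equality $d(gx,gy)=d(Tx,Ty)$ is in hand. The remaining care is purely bookkeeping: confirming $g$ maps $A_0$ into $A_0$ and that its fixed points coincide with the best proximity points of $T$.
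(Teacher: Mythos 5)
Your proposal is correct and follows essentially the same route as the paper: your map $g:A_0\to A_0$ is exactly the composition $g^{-1}T$ that the paper forms from the bijective isometry $g:A_0\to B_0$ of its Lemma 2.2, the closedness of $A_0$ is the paper's Lemma 2.1 (which you reprove inline), and the transfer of the Meir--Keeler condition followed by an appeal to the Meir--Keeler fixed point theorem is precisely what the paper's Remark 2.1 indicates. No gaps.
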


\begin{theorem}(\cite{AG8})
Let $(A,B)$ be a pair of nonempty closed subsets of a complete
metric space $(X,d)$ such that $A_0\neq\emptyset$ and $(A,B)$
satisfies the P-property. Let $T:A\rightarrow 2^B$ be a
multivalued contraction non-self mapping, that is,
\begin{equation*}
H(Tx,Ty)\leq\alpha d(x,y),
\end{equation*}
for some $\alpha\in(0,1)$ and for all $x,y\in A$. If $Tx$ is
bounded and closed in $B$ for all $x\in A$, and $Tx_0$ is included
in $B_0$ for each $x_0\in A_0$, then $T$ has a best proximity
point in $A$.
\end{theorem}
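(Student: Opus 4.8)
The plan is to follow the guiding principle of this note and deduce the statement from the multivalued contraction principle (Nadler's theorem) applied on the set $A_0$. Everything hinges on one structural fact: under the P-property the proximal correspondence between $A_0$ and $B_0$ is a genuine isometry. So the first step is to make this precise. For each $x\in A_0$ there exists $y\in B_0$ with $d(x,y)=dist(A,B)$, and applying the P-property with $x_1=x_2=x$ forces this $y$ to be unique; symmetrically each $y\in B_0$ determines a unique $x\in A_0$. This produces a bijection $g:B_0\to A_0$, where $g(y)$ is the unique point of $A_0$ at distance $dist(A,B)$ from $y$, and the implication in the definition of the P-property reads exactly as $d(g(y_1),g(y_2))=d(y_1,y_2)$; that is, $g$ is an isometry, with isometric inverse $f:A_0\to B_0$.

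Next I would verify that $A_0$ is a complete metric space, so that Nadler's theorem is available. Since $A$ and $B$ are closed in the complete space $X$, they are complete. If $x_n\in A_0$ with $x_n\to x$, then $\{f(x_n)\}$ is Cauchy in $B$ (as $f$ is an isometry), hence converges to some $y\in B$; passing to the limit in $d(x_n,f(x_n))=dist(A,B)$ gives $d(x,y)=dist(A,B)$, so $x\in A_0$. Thus $A_0$ is closed in $X$, and therefore complete.

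The heart of the argument is the reduction. For $x\in A_0$ the hypothesis gives $Tx\subseteq B_0$, so I can define the self-correspondence
\[
Px:=g(Tx)=\{g(y):y\in Tx\}\subseteq A_0 .
\]
Because $g$ is an isometry, $Px$ is isometric to the bounded closed set $Tx$, hence bounded and complete, and so closed; thus $P$ maps $A_0$ into its nonempty closed and bounded subsets. The crucial computation is that an isometry preserves the Hausdorff metric: for $V\subseteq B_0$ one has $D(g(u),g(V))=D(u,V)$, and consequently $H(Px,Py)=H(g(Tx),g(Ty))=H(Tx,Ty)\leq\alpha\,d(x,y)$. Hence $P$ is a multivalued contraction on the complete metric space $A_0$, and Nadler's theorem yields $x^{*}\in A_0$ with $x^{*}\in Px^{*}$. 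Unwinding the definition, $x^{*}=g(y)$ for some $y\in Tx^{*}$, so $d(x^{*},y)=dist(A,B)$ and therefore $D(x^{*},Tx^{*})\leq d(x^{*},y)=dist(A,B)$; since the reverse inequality is automatic, $D(x^{*},Tx^{*})=dist(A,B)$, i.e. $x^{*}$ is a best proximity point of $T$.

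I expect the main obstacle to be the third step, namely checking that $P$ genuinely takes values in the closed and bounded subsets of $A_0$ and that it is Hausdorff-contractive. Both points rest entirely on the isometry property of $g$, which transfers the Hausdorff distance verbatim from $B$ to $A$; this is precisely the place where the P-property is indispensable. The completeness of $A_0$ and the final translation between fixed points of $P$ and best proximity points of $T$ are then routine.
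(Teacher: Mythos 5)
Your proposal is correct and follows essentially the same route as the paper: construct the bijective proximal isometry between $A_0$ and $B_0$ (the paper's Lemma 2.2), note that $A_0$ is closed hence complete (the paper's Lemma 2.1), reduce $T$ to a multivalued self-map on $A_0$, and invoke Nadler's theorem, exactly as the paper indicates in Remark 2.1. The only difference is one of detail: the paper leaves this case as a sketch, while you supply the verifications (uniqueness of the proximal partner, closedness of $A_0$, preservation of the Hausdorff metric under the isometry) that the sketch omits.
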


\section{Main Result}

In this section, we show that the existence of a best proximity
point in the main theorems of \cite{AG6, AG8, B, Raj}, can be
obtained from the existence of the fixed point for a self-map. We
begin our argument with the following lemmas.

\begin{lemma}(\cite{G})
Let $(A,B)$ be a pair of nonempty closed subsets of a complete
metric space $(X,d)$ such that $A_0$ is nonempty and $(A,B)$ has the P-property.
Then $(A_0,B_0)$ is a closed pair of subsets of $X$.
\end{lemma}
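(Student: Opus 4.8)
The plan is to verify directly that both $A_0$ and $B_0$ are closed subsets of $X$, by showing that each of them contains the limit of every convergent sequence of its own points. The decisive tool will be the P-property, which I will use to transfer the Cauchy condition from a sequence in $A_0$ to its associated partner sequence in $B_0$, and symmetrically in the other direction.

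First I would establish the closedness of $A_0$. Let $\{x_n\}$ be a sequence in $A_0$ with $x_n\to x$ in $X$; since $A$ is closed and each $x_n\in A$, we have $x\in A$. By the definition of $A_0$, for each $n$ I may choose a point $y_n\in B$ with $d(x_n,y_n)=dist(A,B)$, and I note that each such $y_n$ automatically lies in $B_0$, while each $x_n$ lies in $A_0$. Applying the P-property to the pairs $(x_n,y_n)$ and $(x_m,y_m)$, both of which realize the distance $dist(A,B)$, then yields the identity $d(x_n,x_m)=d(y_n,y_m)$ for all $n,m$.

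The key step is now immediate: since $\{x_n\}$ converges it is Cauchy, so $d(x_n,x_m)\to 0$, and the identity above forces $d(y_n,y_m)\to 0$, so that $\{y_n\}$ is Cauchy as well. By completeness of $X$ the sequence $\{y_n\}$ converges to some $y\in X$, and closedness of $B$ gives $y\in B$. Passing to the limit in $d(x_n,y_n)=dist(A,B)$, using continuity of the metric, produces $d(x,y)=dist(A,B)$, whence $x\in A_0$. This shows $A_0$ is closed.

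Finally, the closedness of $B_0$ follows by the symmetric argument: starting from a convergent sequence $\{y_n\}$ in $B_0$, I would select partners $x_n\in A_0$ with $d(x_n,y_n)=dist(A,B)$, invoke the P-property to deduce that $\{x_n\}$ is Cauchy, and then use completeness and closedness of $A$ to locate the limit and conclude that the limit of $\{y_n\}$ lies in $B_0$. I expect the only genuine subtlety to be the application of the P-property that produces the transfer of the Cauchy condition between the two sequences; once that identity is in hand, completeness of $X$ and continuity of the metric close the argument routinely.
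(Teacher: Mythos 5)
Your proof is correct, and it is essentially the standard argument: the paper itself states this lemma without proof, citing \cite{G}, and the proof given there proceeds exactly as you do, using the P-property to transfer the Cauchy condition from the sequence in $A_0$ to its partner sequence in $B_0$ (and symmetrically), then invoking completeness of $X$, closedness of $A$ and $B$, and continuity of the metric. No gaps.
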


\begin{lemma}
Let $(A,B)$ be a pair of nonempty closed subsets of a metric space $(X,d)$
such that $A_0$ is nonempty.
Assume that the pair $(A,B)$ has the P-property. Then
there exists a bijective isometry $g:A_0\to B_0$ such that $d(x,gx)=dist(A,B)$.
\end{lemma}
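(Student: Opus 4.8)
The plan is to construct the map $g$ directly from the defining property of $A_0$ and then extract all the required properties from the P-property. First I would define $g$ pointwise: given $x\in A_0$, the very definition of $A_0$ guarantees the existence of some $y\in B$ with $d(x,y)=dist(A,B)$, and such a $y$ automatically lies in $B_0$. I set $g(x):=y$, so that the desired identity $d(x,gx)=dist(A,B)$ holds by construction. The first thing to verify is that this assignment is unambiguous, and this is exactly where the P-property enters.

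The crux of the argument is well-definedness, which I expect to be the main (though short) obstacle since it is the step that genuinely uses the hypothesis. Suppose $y_1,y_2\in B$ both satisfy $d(x,y_1)=d(x,y_2)=dist(A,B)$; then $y_1,y_2\in B_0$, and applying the P-property in the degenerate case $x_1=x_2=x$ yields $d(y_1,y_2)=d(x,x)=0$, forcing $y_1=y_2$. Thus $g:A_0\to B_0$ is a genuine function. The same mechanism, applied now to two distinct points, gives the isometry property: for $x_1,x_2\in A_0$ with $y_i=g(x_i)$, the relations $d(x_i,y_i)=dist(A,B)$ together with the P-property give $d(x_1,x_2)=d(y_1,y_2)=d(gx_1,gx_2)$.

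Injectivity of $g$ is then immediate from the isometry: $g(x_1)=g(x_2)$ forces $d(x_1,x_2)=0$. For surjectivity I would run the construction symmetrically: given $y\in B_0$, the definition of $B_0$ provides $x\in A$ with $d(x,y)=dist(A,B)$, hence $x\in A_0$, and by the uniqueness established above $y$ is the unique point of $B_0$ at minimal distance from $x$, so $g(x)=y$. This exhibits every element of $B_0$ as an image and completes the proof that $g$ is a bijective isometry satisfying $d(x,gx)=dist(A,B)$. I do not anticipate any analytic or topological difficulty here; the entire content is the clean algebraic consequence of the P-property, with the degenerate-diagonal case doing the work for well-definedness and the general case doing the work for the isometry.
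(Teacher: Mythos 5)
Your proof is correct and follows essentially the same route as the paper: define $g$ by nearest-point correspondence, use the P-property with $x_1=x_2$ for well-definedness and with general $x_1,x_2$ for the isometry. In fact your write-up is slightly more complete than the paper's, since you explicitly verify injectivity and surjectivity (via the symmetric construction from $B_0$), whereas the paper asserts bijectivity without spelling out that step.
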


\begin{proof}
Let $x\in A_0$, then there exists an element $y\in B_0$ such that
$$d(x,y)=dist(A,B).$$ Assume that there exists another point $\acute{y}\in B_0$
such that
$$d(x,\acute{y})=dist(A,B).$$ By the fact that $(A,B)$ has the P-property, we conclude that
$y=\acute{y}$. Consider the non-self mapping $g:A_0\to B_0$ such
that $d(x,gx)=dist(A,B)$. Clearly, $g$ is well defined. Moreover,
$g$ is an isometry. Indeed, if $x_1,x_2\in A_0$ then
$$
d(x_1,gx_1)=dist(A,B) \quad \& \quad d(x_2,gx_2)=dist(A,B).
$$
Again, since $(A,B)$ has the P-property,
$$d(x_1,x_2)=d(gx_1,gx_2),$$
 that is, $g$ is an isometry.
\end{proof}

Here, we prove that the existence and uniqueness of the best proximity point in
Theorem 1.3 is a sample result
of the existence of fixed point for a weakly contractive self-mapping.

\begin{theorem}
Let $(A,B)$ be a pair of nonempty closed subsets of a complete metric space $X$
such that $A_0$ is nonempty. Let $T:A\rightarrow B$ be a weakly contractive mapping.
Assume that the pair $(A,B)$ has the P-property and $T(A_0)\subseteq B_0$. Then
$T$ has a unique best proximity point.
\end{theorem}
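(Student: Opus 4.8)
The plan is to reduce this best proximity point problem to a fixed point problem on the set $A_0$ by using the isometry constructed in Lemma 2.3. The key observation is that a best proximity point of $T:A\to B$ corresponds exactly to a fixed point of a suitable self-map on $A_0$, so that the classical weak contraction fixed point theorem (the self-mapping version of Theorem 1.3) applies directly.

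First I would invoke Lemma 2.2 to note that $(A_0,B_0)$ is a closed pair, hence $A_0$ is a closed subset of the complete metric space $X$ and is therefore itself complete. Next I would recall from Lemma 2.3 that there is a bijective isometry $g:A_0\to B_0$ satisfying $d(x,gx)=dist(A,B)$ for every $x\in A_0$; since $g$ is a bijection onto $B_0$, it has an inverse $g^{-1}:B_0\to A_0$ which is also an isometry. The central step is then to define a self-map $P:=g^{-1}\circ T|_{A_0}:A_0\to A_0$. This is well defined precisely because the hypothesis $T(A_0)\subseteq B_0$ guarantees that $Tx\in B_0$ for each $x\in A_0$, so that $g^{-1}(Tx)$ makes sense and lands in $A_0$.

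The next step is to verify that $P$ inherits the weak contraction property from $T$. For $x_1,x_2\in A_0$ I would compute
$$
d(Px_1,Px_2)=d\big(g^{-1}(Tx_1),g^{-1}(Tx_2)\big)=d(Tx_1,Tx_2)\leq d(x_1,x_2)-\phi(d(x_1,x_2)),
$$
where the middle equality uses that $g^{-1}$ is an isometry and the final inequality is the weak contractivity of $T$. Thus $P$ is a weakly contractive self-map on the complete metric space $A_0$, and by the known fixed point theorem for weak contractions (Banach--Weissinger/Rhoades type), $P$ has a unique fixed point $x^*\in A_0$. Finally I would translate this fixed point back: $Px^*=x^*$ means $g^{-1}(Tx^*)=x^*$, hence $Tx^*=gx^*$, and therefore $d(x^*,Tx^*)=d(x^*,gx^*)=dist(A,B)$, showing $x^*$ is a best proximity point. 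Uniqueness of the best proximity point follows from uniqueness of the fixed point of $P$, since the correspondence $x\mapsto$ (best proximity point) is exactly the fixed point relation.

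The main obstacle, or rather the subtle point requiring care, is establishing that $P$ genuinely maps $A_0$ into itself and that the self-mapping weak contraction theorem is legitimately available on $A_0$; both hinge on the completeness of $A_0$ (from Lemma 2.2) and on the isometry $g$ (from Lemma 2.3), so the real content of the argument is packaged entirely into these two lemmas. Once they are in hand, the reduction is essentially formal. I would also note the mild point that the isometry $g^{-1}$ preserves distances exactly, so no distortion factor enters the weak contraction estimate; this is what makes the transfer of the contraction condition clean rather than merely approximate.
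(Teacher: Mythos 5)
Your proposal is correct and follows essentially the same route as the paper: reduce to the self-map $g^{-1}\circ T$ on the closed (hence complete) set $A_0$, verify it is weakly contractive because $g^{-1}$ is an isometry, apply Rhoades' fixed point theorem, and translate the unique fixed point back into the unique best proximity point. If anything, your statement of the transferred inequality $d(Px_1,Px_2)\leq d(x_1,x_2)-\phi(d(x_1,x_2))$ is more carefully written than the paper's, which contains a typographical slip at that step.
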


\begin{proof}
Consider the bijective isometry $g:A_0\to B_0$ as in Lemma 2.2.
Since $T(A_0)\subseteq B_0$, for the self-mapping $g^{-1}T:A_0\to
A_0$ we have
$$
d(g^{-1}(Tx),g^{-1}(Ty))=d(Tx,Ty)\leq\varphi(d(x,y)),
$$
for all $x,y\in A_0$ which implies that the self-mapping $g^{-1}T$
is weakly contractive. Note that $A_0$ is closed by Lemma 2.1.
Thus, $g^{-1}T$ has a unique fixed point (\cite{R}). Suppose that
$x^*\in A_0$ is a unique fixed point of the self-mapping
$g^{-1}T$, that is, $g^{-1}T(x^*)=x^*$. So, $Tx^*=gx^*$ and then
$$
d(x^*,Tx^*)=d(x^*,gx^*)=dist(A,B),
$$
from which it follows that $x^*\in A_0$ is a unique best proximity
point of the non-self weakly contractive mapping $T$.
\end{proof}

\noindent{\bf Remark 2.1.} By a similar argument, using the fact
that every nonexpansive self-mapping defined on a nonempty compact
and convex subset of a Banach space has a fixed point, we conclude
Theorem 1.4. Also, existence and uniqueness of the best proximity
point for Meir-Keeler non-self mapping $T$ follows from the
Meir-Keeler's fixed point theorem (\cite{MK}). Finally, in Theorem
1.5, the Nadler's fixed point theorem (\cite{N}), ensures the
existence of a best proximity point for multivalued non-self $T$.


\begin{thebibliography}{99}
\bibitem{AG6}
Abkar,A., Gabeleh, M., \emph{Best proximity points of non-self mappings}, Top,
DOI 10.1007/s11750-012-0255-7.

\bibitem{AG8}
Abkar,A., Gabeleh, M., \emph{The existence of best proximity points for multivalued
non-self-mappings}, RACSAM,
DOI 10.1007/s13398-012-0074-6.

\bibitem{AG}
Alber, Ya.l., Guerre-Delabriere, S., \emph{Principles of weakly
contractive maps in Hilbert spaces, new results in operator
theory}, in: l. Gohberg, Yu Lyubich (Eds), in: Advanced and
Appl., vol. 98, Birkhauser Verlag, Basel, \textbf (1997), pp.
7-22.

\bibitem{G}
Gabeleh, M., \emph{Proximal weakly contractive and proximal nonexpansive non-self-mappings in metric and
 Banach spaces}, J. Optim. Theory Appl., DOI : 10.1007/s10957-012-0246-8.

\bibitem{MK}
Meir, A., Keeler, E., \emph{A theorem on contraction mappings}, J. Math. Anal. Appl., \textbf{28} (1969),
326-329.

\bibitem{N}
Nadler Jr, S. B., \emph{Multivalued contraction mappings}, Pacific
J. Math., \textbf{30}~(1969), 475-488.

\bibitem{R}
Rohades, B.E., \emph{Some theorems on weakly contractive maps}, Nonlinear Anal., \textbf{47} (2001),
2683-2693.

\bibitem{B}
Samet, B., \emph{Some results on best proximity point theorem}, J.
Optim. Theory Appl., DOI:
 10.1007/s10957-013-0269-9.

\bibitem{Raj}
Sankar Raj, V., \emph{A best proximity point theorem for weakly
contractive non-self-mappings}, Nonlinear Anal., \textbf{74} (2011),
4804-4808.

\end{thebibliography}
\end{document}